\newtheorem{thm}{Theorem}
\newtheorem{lem}[thm]{Lemma}
\newtheorem{prop}[thm]{Proposition}
\newtheorem{cor}[thm]{Corollary}
\theoremstyle{definition}
\def\lf{\left\lfloor}   
\def\rf{\right\rfloor}
\begin{document}

\title{A Probabilistic Two-Pile Game}
\author{Ho-Hon Leung, Thotsaporn ``Aek'' Thanatipanonda }
\date{March 7, 2019}

\maketitle
\thispagestyle{empty}

\begin{abstract}
We consider a game with two piles, in which two players take turn to 
add $a$ or $b$ chips ($a$, $b$ are not necessarily positive) 
randomly and independently to their respective piles. The player
who collects $n$ chips first wins the game. We derive general
formulas for $p_n$, the probability of the second player winning the game by collecting $n$ chips first and
show the calculation for the cases $\{a,b\}$ = $\{-1,1\}$ and $\{-1,2\}$.
The latter case was asked by Wong and Xu \cite{WX}. At the end,
we derive the general formula for $p_{n_1,n_2}$, the 
probability of the second player winning the game by collecting $n_2$ chips before the 
first player collects $n_1$ chips.   \\
\end{abstract}

\noindent \textbf{Accompanied Maple program} \\
This paper comes with the Maple program \texttt{Piles}
which can be found on the second author's website 
\texttt{www.thotsaporn.com}  .

\section{Introduction} \label{section1}
Game is the source of motivation to do mathematics.
A will to win a game is the motivation to figure out mathematics behind it. It is evident that pile games have been played since ancient times. For examples, Nim, Wythoff and their variants are some of the classical pile games. In such games, two players take turn to remove chips from the existing pile(s). 
The rule of removing chips from the pile(s) varies according to the game. 
In Nim, a game with multiple piles, each player may remove any number of chips from one of the available piles. The player who takes the last chip loses the game (misere game). In some other games, the player who takes the 
last chip wins the game (normal play). 
In Wythoff, a game with two piles, a player is allowed to remove any number of chips from one or both piles; when removing chips from both piles, the number of chips removed 
from each pile must be equal. The player who takes the last chip (remove any chip) 
wins the game. These games are quite well known and well studied. For example, 
the ``bible of combinatorial game theory'', {\it Winning ways for your Mathematical Plays}, written 
by Berlekamp, Conway and Guy \cite{WW} is a good reference for a mathematical introduction to these games.

In this paper, we consider a more general version of the game investigated by Wong and Xu \cite{WX}, where two players
take turn to collect a specific number of chips randomly and independently in order to build their own piles (instead of `remove any chip from their own piles'). The player who collects $n$ chips first is the winner. 
Formally speaking, let $n$ be a fixed non-negative integer. In our game, both players start without any chip. 
In each turn, each player flips a fair coin to decide whether to
add $a$ or $b$ chips ($a$, $b$ are {\it not necessarily positive}) to their own piles. The player who collects $n$ chips first is the winner.

Let the random variable $S_k$ be the number of chips collected by a player on his $k^{th}$ move. Let $A$ be the first player and $B$ be the second player. The chance that $B$ wins the game by collecting $n$ chips first is
\begin{align} p_n = \sum_{k=1}^{\infty}  \label{equation0}
P( \mbox{A does not win on his } k^{th} \mbox{ move}) \cdot
P( \mbox{B wins on his } k^{th} \mbox{ move}).  \end{align}

\noindent \textbf{Important notation} 
\begin{itemize}
\item $p_n$ = the  probability for the second player to win the game by collecting $n$ chips first. 
\item $q(n,k)$ = the probability that a player does not win the game on his $k^{th}$ move, i.e., he never collects $n$ chips on or before his $k^{th}$ move.
\item $r(n,k)$ = the probability that a player collects $n$ chips for the first time on his $k^{th}$ move.
\end{itemize}

\noindent The equation (\ref{equation0}) can be written as follows:
\begin{align}  \label{M1}
 p_n = \sum_{k=1}^{\infty} q(n,k) \cdot r(n,k)
\end{align}
where $q(n,k) = P(S_j < n\mbox{ for all }j=0,1,\dots,k)$ 
and $r(n,k) = P(S_k \geq n \mbox{ and } S_{j} < n \mbox{ for all }j=0,1,\dots, k-1) $.\\

\noindent There is a nice connection between the probabilities $q(n,k)$ and $r(n,k)$ as follows: \\

\noindent \textbf{Claim 1:} For any positive integer $n$, 
\[ r(n,k) = q(n,k-1)-q(n,k), \;\ \;\ k=1,2,\dots. \] 
This can be shown by
\begin{align*}
r(n,k) &= P(S_k \geq n \mbox{ and } S_{j} < n,\mbox{ for all }j=0,1,\dots,k-1) \\
&= P(S_k \geq n) - P(S_{k-1} \geq n) \\
&= (1-q(n,k))-(1-q(n,k-1))  \\ 
&= q(n,k-1)-q(n,k).
\end{align*}

\noindent \textbf{Remark:} By Claim 1 and the fact that
$q(n,0)=1$ for all $n \geq 1$,
we write $q(n,k)$ where $ k \geq 1$, by
\begin{equation}  \label{M2}   
q(n,k) = 1- \sum_{j=1}^k r(n,j).            
\end{equation} 

\noindent The following claim works under the condition that the 
game will not go on indefinitely (the probability that either 
one of the players wins the game is 1). \\ 

\noindent \textbf{Claim 2:}
For any fixed positive integer $n$, 
if $\displaystyle \lim_{k \to \infty}q(n,k) =0$  
(i.e., $a+b \geq 0$), then
 \[\displaystyle \sum_{k=1}^{\infty} [q(n,k-1)+q(n,k)] \cdot r(n,k) = 1.\]
This can be shown by
\begin{align*}
& \sum_{k=1}^{\infty} [q(n,k-1)+q(n,k)] \cdot r(n,k) \\
&=  \sum_{k=1}^{\infty} [q(n,k-1)+q(n,k)] \cdot [q(n,k-1)-q(n,k)] 
\;\ \;\ \mbox{ from Claim 1} \\
&= \sum_{k=1}^{\infty} [q^2(n,k-1)-q^2(n,k)] \\
&= q^2(n,0)- \lim_{k \to \infty}q^2(n,k)  \;\ \;\ =1. 
\end{align*}

\noindent Claim 2 can also be explained combinatorially as 
\begin{align*}
1 &= \mbox{$P$( first player wins) + $P$( second player wins) + $P$( nobody wins) }\\
 &= \sum_{k=1}^{\infty} r(n,k)q(n,k-1) + \sum_{k=1}^{\infty} q(n,k)r(n,k)
 + \lim_{k \to \infty}q^2(n,k).
\end{align*}

In Theorem \ref{theorem1} below, we write the probability $p_n$ in terms of the probabilities $r(n,k)$ alone.
\begin{thm}\label{theorem1}
If $\displaystyle \lim_{k \to \infty}q(n,k)=0$, then
$\displaystyle p_n = \dfrac{1}{2}-\dfrac{1}{2}\sum_{k=1}^{\infty} r^2(n,k)$.
\end{thm}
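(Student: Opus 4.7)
The plan is to use equation~(\ref{M1}), Claim~1, and Claim~2 together in an algebraic manipulation that lets us eliminate the $q$-probabilities entirely in favor of the $r$-probabilities.

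First, I would rewrite $p_n = \sum_{k \geq 1} q(n,k) r(n,k)$ by invoking Claim~1 in the form $q(n,k) = q(n,k-1) - r(n,k)$, which gives
\[
p_n = \sum_{k=1}^\infty q(n,k-1) r(n,k) - \sum_{k=1}^\infty r^2(n,k).
\]
So the task reduces to showing that the remaining sum $\sum_{k \geq 1} q(n,k-1) r(n,k)$ equals $1 - p_n$.

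For this, I would appeal to Claim~2, which (under the standing hypothesis $\lim_{k\to\infty} q(n,k) = 0$) says
\[
\sum_{k=1}^\infty \bigl[q(n,k-1) + q(n,k)\bigr] r(n,k) = 1.
\]
Splitting this sum and using the definition of $p_n$ for the $q(n,k) r(n,k)$ piece gives exactly $\sum_{k \geq 1} q(n,k-1) r(n,k) = 1 - p_n$. Substituting back yields $p_n = (1-p_n) - \sum_{k \geq 1} r^2(n,k)$, and solving for $p_n$ gives the claimed formula.

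I do not expect a serious obstacle: both of the ingredients are already established in the excerpt, and the convergence issues needed to split the series in Claim~2 are exactly what the hypothesis $\lim_{k\to\infty} q(n,k) = 0$ guarantees (the two resulting series have nonnegative terms bounded by the convergent series defining $p_n$ and $1-p_n$, so rearrangement is legitimate). The only subtle point to mention is the telescoping/probabilistic interpretation that makes Claim~2 effectively encode ``first player wins $+$ second player wins $= 1$'', which is precisely the identity we exploit.
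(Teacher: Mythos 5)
Your argument is correct and is essentially the paper's own proof: both use Claim~2 to identify $\sum_{k\geq 1} q(n,k-1)r(n,k)$ with $1-p_n$ and Claim~1 to turn the difference $q(n,k-1)-q(n,k)$ into $r(n,k)$, differing only in the order of substitution (the paper averages the two expressions for $p_n$ and then applies Claim~1). No gap.
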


\begin{proof}
By definition,   \[\displaystyle p_n = \sum_{k=1}^{\infty} q(n,k) \cdot r(n,k).  \]
On the other hand, based on Claim 2, we have  \[\displaystyle 
p_n = 1-\sum_{k=1}^{\infty} q(n,k-1) \cdot r(n,k).  \]
Then, by combining the first two equations above, we obtain
\[  p_n = \dfrac{1}{2}-\dfrac{1}{2}\sum_{k=1}^{\infty} [q(n,k-1)-q(n,k)] \cdot r(n,k). \]
Then, the result follows by applying Claim 1.
\end{proof}

\noindent 
In order to find $p_n$, by Theorem \ref{theorem1}, we just need 
to compute the probabilities $r(n,k)$ and find the sum of squares of them. 
For example, for the case $\{a,b\}=\{1,2\}$, the probability
$r(n,k)$ is given by the following expression \cite[Theorem 3]{WX}:
\[  r(n,k) = \dfrac{1}{2^k}\left[ \binom{k}{n-k}+\binom{k-1}{n-k} \right].\]
Although it can be shown that the expression $\displaystyle \sum_{k=1}^{\infty} r^2(n,k)$
does not have a closed-form formula 
(strictly speaking, the sum is not Gosper-summable. 
See Chapter 8 of the book `$A=B$' \cite{AB} for reference),
its approximation was done by Wong and Xu \cite[p.12]{WX}. In particular, they showed that   
\[  \displaystyle \sum_{k=1}^{\infty} r^2(n,k)\sim\sqrt{\dfrac{27}{8\pi n}}\]when $n$ is large.

\noindent Furthermore, they worked on the cases $a>0, b>0$
and asked the reader to investigate the case $\{a,b\}=\{-1,2\}.$
We will answer their question along with the answer 
to the case $\{a,b\}=\{-1,1\}$.

\section{The case $\{a,b\}=\{-1,1\}$} \label{section2}

Each player adds/removes one chip with 
probability 1/2 to/from his pile. 
The pile is allowed to have a negative number of chips. 
The first player who collects $n$ chips wins the game.  \\ 

\noindent If $n=0$, the first player will always win as both players start without any chip. 
The probability for the second player to win the game is $0$, i.e., $p_0 = 0$.

\subsection{The winning probability for the first non-trivial case: $n=1$} \label{section2.1}

In this subsection, we simplify our notation slightly. The probabilities $q(1,k)$ and $r(1,k)$ defined in Section \ref{section1} are abbreviated to $q(k)$ and $r(k)$ respectively.\\

\noindent Let $C(k)$ be the number of ways for a player to have no chip on his $k^{th}$ move without ever collecting one chip 
(so the game still goes on). \\ \\
At this point, the reader may notice that the number $C(k)$ is
related to the famous Catalan numbers. 
In fact, for $m=1,2,\dots$, we have
\[C(2m-1) = 0\]and
\[C(2m)= \dfrac{\binom{2m}{m}}{m+1}.\]

\noindent The probability that the second player collects one chip for the first time on his $k^{th}$ move is
\[r(k) = \dfrac{C(k-1)}{2^k}\]because the player has
no chip on his $(k-1)^{th}$ move, and on his next move he must collect one chip to win the game.
Hence, for $m=1,2,\dots$, we have
\[r(2m)=0\]and
\[r(2m-1) = \dfrac{(2m-2)!}{m!(m-1)!}\cdot \dfrac{2}{4^m}.\]

\noindent We would like to apply Theorem \ref{theorem1} to find $p_1$,
the probability of the second player winning the game by
collecting one chip first. First, we need to verify that the condition  
$\displaystyle \lim_{k \to \infty}q( k)=0$ is true, i.e., 
the probability that one of the players wins the game by collecting one chip is 1. 
Equivalently, by \eqref{M2}, we show that 
\begin{align} \label{equation100} \sum_{k=1}^{\infty} r(k) &= 1.  \end{align}

\noindent Fortunately, there are many ways to evaluate this sum.  
The second author's favorite tool for evaluating geometric sums
(the binomial sum is a geometric sum) is Gosper's Algorithm.
The detail of the algorithm was beautifully explained in Chapter 5 of the book `$A=B$', a masterpiece written by Petkovsek, Wilf and Zeilberger \cite{AB}. 
This algorithm has been implemented 
in all major symbolic computation programs
like Maple and Mathematica. For example, in Maple, one conveniently types \\ \\
 \texttt{ sum(  (2*m-2)!/m!/(m-1)!*2/4\string^m, m=1..M);} \\ \\
then Maple will return the expression $1-\dfrac{(2M)!}{M!M!4^M}$. By an application of the Stirling's
formula: $n! \approx \sqrt{2\pi n} \left(\dfrac{n}{e}\right)^n$, we note that \[1-\dfrac{(2M)!}{M!M!4^M} \rightarrow 1 \mbox{ as }M\rightarrow \infty.  \]Hence, the equation (\ref{equation100}) is true.

\noindent We are ready to find $p_1$ by Theorem \ref{theorem1}. To evaluate 
\[ \sum_{k=1}^{\infty} r^2(k),  \]
in Maple, we type \\ \\
\texttt{ sum( ( (2*m-2)!/m!/(m-1)!*2/4\string^m )\string^2, m=1..infinity);} \\ \\
and then Maple returns $\dfrac{4}{\pi}-1$. By Theorem \ref{theorem1}, 
\[  p_1 = \dfrac{1}{2}-\dfrac{1}{2}\left(\dfrac{4}{\pi}-1\right) 
\approx 0.3633802277. \]

\noindent Hence, the value of $p_1$ is conveniently
obtained by applying Theorem \ref{theorem1} and the summation tools in Maple. 

\subsection{ The winning probabilities for the cases $n\geq 2$} \label{section2.2}

Analogously, let $C(n,k)$ be the number of ways for a player to 
have $n-1$ chips on his $k^{th}$ move without ever collecting $n$ chips.

\begin{lem} \label{lemma2}
For $n \geq 0, \;\ k \geq 1$ such that $(n-k) \equiv 0 \mod 2$, then \[C(n,k) = 0.\]
Otherwise, for $s \geq 0,$
\begin{align*} 
C(2s+1,2m) &= \dfrac{2s+1}{m+s+1}\binom{2m}{m-s}
 , \;\ \;\ \mbox{for } m=0,1,2,\dots, \\
C(2s,2m-1) &= \dfrac{s}{m}\binom{2m}{m-s}
, \;\ \;\ \mbox{for } m=1,2,\dots,.
\end{align*}
\end{lem}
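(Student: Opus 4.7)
The plan is to interpret $C(n,k)$ as the number of lattice paths with $\pm 1$ steps of length $k$ starting at $0$, ending at $n-1$, and never touching the horizontal line at height $n$. Because steps have size $1$, the player can only ``collect $n$ chips'' by landing exactly on $n$ at some intermediate step, so avoiding level $n$ is equivalent to staying $\le n-1$ throughout. With this interpretation, the formula reduces to a standard reflection-principle count.

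First I would dispose of the parity claim: after $k$ moves of $\pm 1$ starting from $0$, the pile's value has the same parity as $k$. To finish at $n-1$ we need $k \equiv n-1 \pmod 2$, i.e.\ $n-k$ odd; if $n-k$ is even no such path exists and $C(n,k)=0$. Next, with parities matching, the total number of unrestricted paths of length $k$ from $0$ to $n-1$ equals $\binom{k}{(k+n-1)/2}$, since one chooses which $(k+n-1)/2$ of the $k$ steps are $+1$.

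The key step is the reflection argument. I would apply the standard reflection principle about the line $y=n$: any path from $0$ to $n-1$ that visits level $n$ gets reflected from its first visit onward, producing a bijection with unrestricted paths of length $k$ from $0$ to $2n-(n-1)=n+1$. Thus the number of ``bad'' paths is $\binom{k}{(k+n+1)/2}$, and
\[ C(n,k) = \binom{k}{(k+n-1)/2} - \binom{k}{(k+n+1)/2}. \]

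Finally I would specialize and simplify. For $n=2s+1$, $k=2m$ the formula becomes $\binom{2m}{m+s}-\binom{2m}{m+s+1}$; factoring out $\binom{2m}{m+s}$ and using $\binom{2m}{m+s}=\binom{2m}{m-s}$ yields the claimed $\frac{2s+1}{m+s+1}\binom{2m}{m-s}$. The case $n=2s$, $k=2m-1$ is analogous: $\binom{2m-1}{m+s-1}-\binom{2m-1}{m+s} = \frac{2s}{m+s}\binom{2m-1}{m+s-1}$, which rewrites as $\frac{s}{m}\binom{2m}{m-s}$ via $\binom{2m-1}{m+s-1}=\frac{m+s}{2m}\binom{2m}{m+s}$. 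No step is genuinely hard; the only thing to be careful about is bookkeeping the two parities separately and checking that the reflected endpoint $n+1$ still has the right parity for the binomial coefficient to make sense (it does, since both $n-1$ and $n+1$ share the parity of $k$).
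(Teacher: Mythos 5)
Your proof is correct, but it takes a genuinely different route from the paper. You count directly via the reflection principle: interpreting $C(n,k)$ as $\pm 1$-paths from $0$ to $n-1$ that avoid level $n$ (which, as you note, is equivalent to staying $\le n-1$ since unit steps cannot jump over $n$), you get the uniform closed form
\[
C(n,k)=\binom{k}{\tfrac{k+n-1}{2}}-\binom{k}{\tfrac{k+n+1}{2}},
\]
and your algebraic specializations to the two parity classes check out (including the identity $\binom{2m-1}{m+s-1}=\frac{m+s}{2m}\binom{2m}{m+s}$). The paper instead derives the first-step recurrence $C(n,k)=C(n-1,k-1)+C(n+1,k-1)$, rewrites it as $C(n,k)=C(n-1,k+1)-C(n-2,k)$, and inducts on $n$ from the Catalan base case $C(1,2m)=\binom{2m}{m}/(m+1)$. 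Your argument is more self-contained and arguably cleaner (one formula, no case-by-case induction, and it even explains the $n=0$ vanishing since the two binomials coincide there); the paper's recurrence, however, is not just a proof device --- it is reused immediately afterwards, via $r(n,k)=C(n,k-1)/2^k$, to prove Lemma 3 by the telescoping identity $r(n,k)=2r(n-1,k+1)-r(n-2,k)$, so the authors get extra mileage from their approach. The only point worth making explicit in your write-up is the convention at $j=0$: the definition requires $S_j<n$ for all $j=0,\dots,k$, which is automatic for $n\ge 1$ but is what forces $C(0,k)=0$.
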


\begin{proof}
The main recurrence relation for the numbers $C(n,k)$ is
\[   C(n,k) = C(n-1,k-1) +C(n+1,k-1), \;\  n \geq 1, \;\ k \geq 1.  \]
If the first move is 1, then the number of ways for a player to have $n-1$ chips on his $k^{th}$ move is $C(n-1,k-1)$. 
On the other hand, if the first move is $-1$, then this number is $C(n+1,k-1)$. We rearrange terms and shift variables to obtain
\begin{equation} \label{M3}
  C(n,k) = C(n-1,k+1) - C(n-2,k), \;\  n \geq 2, \;\ k \geq 0.  
 \end{equation}
 Then the results follow by induction on $n$ 
 where base cases are given by \[C(0,k) =0, \quad C(1,2m-1)=0 \mbox{ and } 
 C(1,2m)= \dfrac{\binom{2m}{m}}{m+1}.\]
 It is also important to note that $C(n,0)=0$ for all $n \geq 0$ except $C(1,0) =1.$ 

\end{proof}

\noindent We proceed the same way as in Section \ref{section2.1}. The following relation between the numbers $r(n,k)$ and $C(n,k-1)$ is true for $n\geq 0$: \begin{align} \label{equation101} r(n,k)&= \dfrac{C(n,k-1)}{2^k}.\end{align}By Lemma \ref{lemma2}, for $n \geq 0, \;\ k \geq 1$ and $n-k \equiv 1 \mod 2$, we have 
\begin{equation} \label{zero} 
r(n,k) = 0.
\end{equation} 
Otherwise, for $s \geq 0, $
\begin{align*} 
r(2s+1,2m+1) = \dfrac{C(2s+1,2m)}{2^{2m+1}} 
&= \dfrac{2s+1}{(m+s+1)\cdot 2\cdot 4^m}\binom{2m}{m-s},
 \;\ \;\ \mbox{for } m=0,1,2,\dots, \\
r(2s,2m) = \dfrac{C(2s,2m-1)}{2^{2m}} &= \dfrac{s}{m\cdot 4^m}\binom{2m}{m-s}
, \;\ \;\ \mbox{for } m=1,2,\dots,.
\end{align*}

\noindent To apply Theorem \ref{theorem1}, it is necessary to prove the following lemma.

\begin{lem} \label{lemma3}
For each $n \geq 1$, the probability that one of the players wins the game (by collecting $n$ chips first)
is 1, i.e.,
\[ \sum_{k=1}^{\infty}r(n,k) = 1. \]
\end{lem}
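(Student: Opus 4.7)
The plan is to reduce the general case $n \geq 1$ to the base case $n = 1$ already settled in Section~\ref{section2.1}, via a first-passage decomposition. The key observation is a probabilistic reading of the $C(n,k)$ recursion from the proof of Lemma~\ref{lemma2}: the number $r(n,k)$ is exactly the probability that a simple symmetric random walk on $\mathbb{Z}$ starting at $0$ first reaches level $n$ at step $k$.

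Starting from $0$, in order to first reach level $n$, the walk must first reach level $1$ at some intermediate step $j < k$ (contributing $r(1,j)$), and then starting fresh from level $1$ it must first reach level $n$ in the remaining $k - j$ steps; by translation invariance this latter probability equals $r(n-1, k-j)$, and the two segments are independent. This yields the convolution
\begin{equation*}
r(n,k) \; = \; \sum_{j=1}^{k-1} r(1,j)\, r(n-1, k-j), \qquad n \geq 2, \; k \geq 2.
\end{equation*}
Introducing the generating function $F_n(x) := \sum_{k \geq 1} r(n,k)\, x^k$, this convolution becomes the product identity $F_n(x) = F_1(x)\, F_{n-1}(x)$, so by induction on $n$,
\begin{equation*}
F_n(x) \; = \; F_1(x)^n.
\end{equation*}
Specializing to $x = 1$ and invoking $F_1(1) = \sum_{k \geq 1} r(k) = 1$, which is exactly equation~(\ref{equation100}) established in Section~\ref{section2.1}, gives $\sum_{k \geq 1} r(n,k) = F_n(1) = 1$, as desired.

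The main obstacle is justifying the convolution identity in a manner consistent with the combinatorial flavor of the paper rather than simply citing the strong Markov property. The natural route is a bijection on lattice paths using the quantity $C(n,k)$ already at hand: any path from $0$ of length $k$ that remains strictly below $n$ through step $k-1$ and first attains $n$ at step $k$ splits uniquely at its first visit to level $1$ into a prefix of length $j$ (counted by $C(1,j-1)$ and hence contributing a weighted factor $r(1,j)$) and a suffix that, after translating by $-1$, first reaches level $n-1$ from $0$ in $k-j$ steps (contributing $r(n-1,k-j)$); this bijection delivers the convolution cleanly.

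As a backup that stays closer in spirit to Section~\ref{section2.1}, one can bypass the convolution and attack the two parity cases in the statement directly by Gosper's algorithm applied to the explicit formulas for $r(2s, 2m)$ and $r(2s+1, 2m+1)$. Each partial sum telescopes into a single closed-form hypergeometric term, which can then be shown to tend to $1$ by Stirling's approximation, exactly as was done for the special case $n = 1$. The generating-function approach is, however, more structural and will be useful later.
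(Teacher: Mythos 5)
Your proof is correct, but it takes a genuinely different route from the paper. The paper proves Lemma~\ref{lemma3} by induction on $n$ using the linear recurrence $r(n,k) = 2r(n-1,k+1) - r(n-2,k)$, which it derives from the recurrence \eqref{M3} for $C(n,k)$ together with \eqref{equation101}; summing over $k$ then telescopes, with a separately handled base case $n=2$ and the boundary observation $r(n-1,1)=0$ for $n \geq 3$. You instead decompose a first-passage path to level $n$ at its first visit to level $1$, obtaining the convolution $r(n,k) = \sum_j r(1,j)\,r(n-1,k-j)$ and hence $F_n(x) = F_1(x)^n$, so that $F_n(1) = F_1(1)^n = 1$ follows from \eqref{equation100}. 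Both arguments are sound; the only point worth making explicit in yours is that evaluating the convolution at $x=1$ (i.e.\ interchanging the two infinite sums) is justified because all terms are nonnegative, and that the decomposition is legitimate precisely because a $\pm 1$ walk cannot reach $n \geq 2$ without passing through $1$ --- a feature special to this step set, which is why the same argument would need modification for $\{-1,2\}$. Your approach buys more than the paper's: the identity $F_n(x) = F_1(x)^n$ determines every $r(n,k)$ from the Catalan-number case $n=1$ and in fact explains the closed forms of Lemma~\ref{lemma2}, whereas the paper's recurrence argument is purely formal and yields only the sum. The paper's argument, on the other hand, requires no probabilistic input beyond the combinatorial recurrence already established, and so fits more tightly into the surrounding machinery. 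Your Gosper-based backup would also work but, as you note, degenerates into a case-by-case verification of the two parity families; it is the least informative of the three.
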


\begin{proof}
This can be done by induction on $n$. By (\ref{M3}) and (\ref{equation101}), we have the following recurrence relation for the numbers $r(n,k)$,
\begin{align} \nonumber r(n,k) &= \dfrac{C(n,k-1)}{2^k} = \dfrac{C(n-1,k)}{2^k}-\dfrac{C(n-2,k-1)}{2^k} \\
&= 2r(n-1,k+1) - r(n-2,k). \label{equation102} \end{align}
By (\ref{equation100}), we note that \begin{align} \label{equation103} \displaystyle \sum_{k=1}^{\infty}r(1,k) &= 1.\end{align}
For $n=2$, by (\ref{equation102}) and (\ref{equation103}), we get
\begin{align*}  \sum_{k=1}^{\infty}r(2,k) &=  \sum_{k=1}^{\infty} (2r(1,k+1)-r(0,k))
= 2\sum_{k=1}^{\infty} r(1,k+1) +2r(1,1) -2r(1,1)  \\
&=  2\sum_{k=1}^{\infty} r(1,k) -2r(1,1) = 2-1 =1,
\end{align*}
since $r(0,k) =0$ for all $k \geq 1.$ 

\noindent For the cases $n \geq 3$, by (\ref{equation102}) and the inductive hypothesis, we have
\begin{align*}  \sum_{k=1}^{\infty}r(n,k) &=  \sum_{k=1}^{\infty} (2r(n-1,k+1)-r(n-2,k))\\
&= 2\sum_{k=1}^{\infty} r(n-1,k) -\sum_{k=1}^{\infty}r(n-2,k) = 2-1 =1,
\end{align*}
since $r(n-1,1) =0$ for all $n \geq 3.$
\end{proof}

\noindent Finally, we are in the position to apply Theorem \ref{theorem1} for each value of $n$.
For example, to evaluate 
$\displaystyle \sum_{k=1}^{\infty} r^2(2,k)$, we type \\

\noindent \texttt{ sum( (binomial(2*m,m-1)/m/4\string^m)\string^2  ,m=1..infinity);} \\

\noindent and then Maple returns $\dfrac{16}{\pi}-5$. By Theorem \ref{theorem1},
\[  p_2 = \dfrac{1}{2}-\dfrac{1}{2}(\dfrac{16}{\pi}-5) \approx 0.4535209109.  \]

\noindent The other values of $p_n$ are  \\ \\
$ n= 3, \;\ \displaystyle \sum_{k=1}^{\infty} r^2(3,k) = \dfrac{236}{3\pi}-25$ 
and $p_3 \approx 0.4798111434 $,    \\
$ n= 4, \;\ \displaystyle \sum_{k=1}^{\infty} r^2(4,k) = \dfrac{1216}{3\pi}-129$ 
and $p_4 \approx 0.4891964033 $,    \\
$ n= 5, \;\ \displaystyle \sum_{k=1}^{\infty} r^2(5,k) = \dfrac{32092}{15\pi}-681$ 
and $p_5 \approx 0.4933044576 $,    \\
$ n= 6, \;\ \displaystyle \sum_{k=1}^{\infty} r^2(6,k) = \dfrac{172144}{15\pi}-3653$ 
and $p_6 \approx 0.4954322531. $    \\
$\dots.$ \\ \\
There is an interesting pattern for the values of $\displaystyle \sum_{k=1}^{\infty} r^2(n,k)$ for $n=1,2,\dots$. 
In fact, let $T_n$ be $\displaystyle \sum_{k=1}^{\infty} r^2(n,k)$. The terms 
$T_n$ satisfy a recurrence relation with polynomial coefficients:
\[    (n+3)T_{n+3}-(7n+16)T_{n+2}+(7n+5)T_{n+1}-n T_n=0.     \]
(\textbf{Remark:} The guessing recurrence relation was found by a holonomic ansatz, i.e., we assume that 
the sequence $T_n$ satisfies a relation of the form
\[ (a_0n+b_0)T_{n}+(a_1n+b_1)T_{n+1}+\dots+(a_Nn+b_N)T_{n+N}=0\]
for some specific $N$. Then, by plugging in values of $n, T_n, T_{n+1}, \dots, T_{n+N}$, 
say, for $n=1,2,\dots,30$, we can solve the system of linear equations for the unknowns $a_i$ and $b_i$.) 
The authors believe that it can be shown formally by Zeilberger's algorithm, 
see Chapter 6 of the book `A=B' \cite{AB} and Zeilberger's Maple package \texttt{Ekhad}. 
However, since the terms $T_n$ are separated
into two cases ($n$ is odd or $n$ is even), the proof may not be straightforward.

\subsection{The winning probability within $k$ moves} \label{section2.3}
The case $\{a,b\} = \{-1,1\}$  is so nice that
we can even answer more questions than the one that was asked in the first place.
In this subsection, for a fixed positive integer $n$, we find the probability that the second player wins within $k$ moves.
All this could be done, thanks once again to Gosper's algorithm!  \\

\noindent First, we demonstrate it for the case $n=1$. The other
cases can be done in the same way, although we have to do it
case by case. We recall from Section \ref{section2.1} that 
\[r(2m) =0  \]
and 
\[ r(2m-1) = \dfrac{(2m-2)!}{m!(m-1)!}\cdot \dfrac{2}{4^m} .   \]

\noindent By the remark after Claim 1, 
\[   q(2m)=q(2m-1) = 1-\sum_{j=1}^m r(2j-1) 
=  1-\sum_{j=1}^m \dfrac{(2j-2)!}{j!(j-1)!}\cdot \dfrac{2}{4^j}.\]
This finite sum is Gosper-summable. In Maple, we type \\

\noindent \texttt{ simplify( 1-sum( (2*j-2)!*2/j!/(j-1)!/4\string^j  ,j=1..m));} \\ \\
and then the output is $\dfrac{\binom{2m}{m}}{4^m}.$ 
(The expression is so nice! It is screaming for a combinatorial explanation!) \\

\noindent Lastly, the probability that the second player 
wins within $k$ moves (the partial sum of \eqref{M1}) is

\begin{align*} 
\sum_{i=1}^k q(i)r(i)  &= \sum_{j=1}^{\lf\frac{k+1}{2}\rf} q(2j-1)r(2j-1)\\
&= \sum_{j=1}^{\lf\frac{k+1}{2}\rf} \dfrac{\binom{2j}{j}}{4^j}\dfrac{(2j-2)!}{j!(j-1)!}\cdot \dfrac{2}{4^j}\\
&= \sum_{j=1}^{\lf\frac{k+1}{2}\rf} \binom{2j}{j}^2 \cdot \dfrac{1}{16^j}\cdot \dfrac{1}{2j-1}.\\
\end{align*}

\noindent In Maple, we type\\

\noindent \texttt{ sum(binomial(2*j,j)\string^2/16\string^j/(2*j-1),j=1..L); } \\ \\
and then we again have a closed-form solution. The probability that the second player wins within $k$ moves is \\
\[  1-\dfrac{2L+1}{16^L}\binom{2L}{L}^2, 
\;\ \;\ \mbox{where } L = \lf\frac{k+1}{2}\rf.          \]

\subsection{Average duration of a game play} \label{section2.4}
The question about the average duration of a game play 
comes up naturally from the work in the previous
section. It would also be interesting to do research on average duration of different
kind of games. As a reference, we refer to the work done earlier by Robinson and Vijay on the duration 
of the game \textit{Dreidel} \cite{RS}. In this subsection,
we let a random variable $X$ be the number of moves required to end the game.

\begin{thm}
For any take-away move $\{a,b\}$ where $a$ and $b$ are not necessarily positive,
the average duration of the game is 
$\displaystyle \sum_{k=0}^{\infty} q(n,k)^2.$
\end{thm}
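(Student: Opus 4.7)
The plan is to invoke the standard tail-sum formula $E[X] = \sum_{k=0}^{\infty} P(X > k)$ for a nonnegative integer-valued random variable (which is valid in the extended sense, with both sides possibly being $+\infty$). Once this identity is in hand, the entire theorem reduces to recognizing the survival probability $P(X > k)$ in terms of $q(n,k)$.

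The first step is to fix the convention that $X$ counts rounds, where round $k$ consists of A's $k$-th turn followed, if A has not yet won, by B's $k$-th turn. Under this convention, $\{X > k\}$ is precisely the event that the game has not been decided after $k$ full rounds, which happens if and only if A has failed to collect $n$ chips in her first $k$ moves \emph{and} B has failed to collect $n$ chips in her first $k$ moves. Since the coin flips driving A's pile are independent of those driving B's pile, these two failure events are independent, and each has probability $q(n,k)$ by definition. Hence
\[  P(X > k) = q(n,k) \cdot q(n,k) = q(n,k)^2,  \]
and plugging back into the tail-sum identity yields the advertised formula $E[X] = \sum_{k=0}^\infty q(n,k)^2$.

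The only real subtlety, and where I would expect to spend the bulk of a careful write-up, is justifying that ``the game survives round $k$'' truly factors as an intersection of two independent events despite A moving first within each round. The worry is that if A wins on her $k$-th move, then B never performs her $k$-th flip, so the two processes might appear coupled through the stopping rule. I would resolve this cleanly by using the standard construction: take two independent i.i.d.\ coin sequences for A and B with associated partial-sum processes $S_k^A$ and $S_k^B$ defined for every $k \geq 0$, and set $X = \min\{k \geq 0 : S_k^A \geq n \text{ or } S_k^B \geq n\}$. Then $\{X > k\}$ is exactly the product event that neither running maximum has reached $n$ by time $k$, so $P(X > k) = q(n,k)^2$ with no timing issues whatsoever. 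Notice that no growth hypothesis on $a+b$ is needed for the identity itself: if $a+b \leq 0$ the series may diverge, which is consistent with $E[X] = +\infty$, while for $a+b > 0$ the exponential decay of $q(n,k)$ ensures convergence.
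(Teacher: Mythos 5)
Your proof is correct, and it reaches the formula by a slightly different (and arguably cleaner) route than the paper. The paper computes $E[X]=\sum_{k\geq 1} k\cdot P(X=k)$, writes $P(X=k)=[q(n,k-1)+q(n,k)]\,r(n,k)$ as the sum of the two players' winning probabilities at round $k$, telescopes this to $q(n,k-1)^2-q(n,k)^2$ via Claim 1, and then performs an Abel summation to arrive at $\sum_{k\geq 0}q(n,k)^2$. You instead invoke the tail-sum formula $E[X]=\sum_{k\geq 0}P(X>k)$ and identify $P(X>k)=q(n,k)^2$ directly from the independence of the two piles; the paper never states this identification explicitly, though it is the combinatorial content behind its telescoping step. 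What your version buys is a more robust treatment of the degenerate case: when $a+b<0$ the game survives forever with positive probability, so $\sum_k k\cdot P(X=k)$ over finite $k$ does not by itself account for $E[X]=\infty$, and the Abel summation in the paper also quietly needs the boundary term $K\,q(n,K)^2$ to vanish. The tail-sum identity holds in the extended sense with no such caveats, so your argument covers the full generality claimed in the theorem statement (``$a$ and $b$ not necessarily positive'') without extra work. Your care about the timing issue within a round is also well placed and correctly resolved by the product-space construction.
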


\begin{proof}We compute $E[X]$ as follows:
\begin{align*}
  E[X] &= \sum_{k=1}^{\infty} k \cdot P(\mbox{ the game ends at the } k^{th} \mbox{ turn})  \\ 
&= \sum_{k=1}^{\infty} k \cdot  [q(n,k-1)r(n,k)+q(n,k)r(n,k)] \\
&= \sum_{k=1}^{\infty} k \cdot  [q(n,k-1)^2-q(n,k)^2] \\
&= \sum_{k=1}^{\infty} q(n,k-1)^2.
\end{align*}
The result follows by shifting the index $k$ by 1.
\end{proof}

\begin{cor}
For the case $\{a,b\}=\{-1,1\},$  $E[X]= \infty$ for any $n \geq 1$.
\end{cor}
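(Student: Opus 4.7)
The plan is to apply the preceding theorem, which gives $E[X] = \sum_{k=0}^{\infty} q(n,k)^2$, and then to show that this series diverges for every $n \geq 1$ in the $\{-1,1\}$ model.

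First I would settle the base case $n=1$ using the closed form already obtained in Subsection \ref{section2.3}, namely $q(1,2m-1) = q(1,2m) = \binom{2m}{m}/4^m$. By Stirling's formula, which is already invoked earlier in the paper, $\binom{2m}{m}/4^m \sim 1/\sqrt{\pi m}$, so $q(1,k)^2 \sim 1/(\pi m)$ whenever $k \in \{2m-1,2m\}$. Grouping the terms of $\sum_{k=0}^{\infty} q(1,k)^2$ by parity then yields a constant multiple of the harmonic series, which diverges; this already gives $E[X]=\infty$ when $n=1$.

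For $n \geq 2$ I would invoke a one-line monotonicity argument. Writing $q(n,k) = P(\max_{0 \leq j \leq k} S_j < n)$ exhibits $q(n,k)$ as a non-decreasing function of $n$, since raising the threshold can only enlarge the event; in particular $q(n,k) \geq q(1,k)$ for every $k$. Hence
\[
\sum_{k=0}^{\infty} q(n,k)^2 \;\geq\; \sum_{k=0}^{\infty} q(1,k)^2 \;=\; \infty,
\]
which finishes the proof.

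There is really no serious obstacle here. All the nontrivial input is already present in Subsection \ref{section2.3}: the closed form for $q(1,k)$ together with Stirling's estimate is exactly what is needed to force a divergent sum of squares, and the monotonicity step reduces every higher threshold $n$ to the case $n=1$ at essentially no cost. The only minor point worth noting is that the Abel-summation step in the proof of the preceding theorem is harmless even in the non-convergent regime, because $E[X] = \sum_{k=0}^{\infty} P(X>k) = \sum_{k=0}^{\infty} q(n,k)^2$ holds directly from the independence of the two players' paths.
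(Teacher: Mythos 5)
Your proposal is correct and follows essentially the same route as the paper: divergence for $n=1$ via the closed form $q(1,2m)=\binom{2m}{m}/4^m$ and the Stirling estimate $q(1,k)^2\sim 1/(\pi m)$ giving a harmonic-type series, then reduction of $n\geq 2$ to $n=1$ by monotonicity. Your explicit justification that $q(n,k)\geq q(1,k)$ (and the remark that $E[X]=\sum_{k\geq 0}P(X>k)$ validates the telescoping even in the divergent case) merely makes precise what the paper asserts as ``clear.''
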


\begin{proof}
For $n =1$, by our computation of the probability $q(k)$ in Section \ref{section2.3}, we have \[ E[X] = \sum_{k=0}^{\infty} q(k)^2= 1+2 \cdot \sum_{m=1}^\infty
 \left[\dfrac{\binom{2m}{m}}{4^m} \right]^2 = \infty. \]
The last equality was found by Maple. 
But it can also be seen through the following approximation:  \[\left[\dfrac{\binom{2m}{m}}{4^m} \right]^2 
\approx \dfrac{1}{\pi m} \mbox{ as } m \to \infty.\] 
The sum indeed diverges but the rate of divergence is as slow as the harmonic series.
For $n > 1,$ it is clear that the average duration of the game
will be longer than $n=1$. Therefore, $E[X]= \infty$ too for $n>1$.
\end{proof}

\section{The case $\{a,b\}=\{-1,2\}$} \label{section3}

For this case, each player is allowed to add two chips to his pile or remove one chip from his pile, each with probability $1/2$. The pile is allowed
to have a negative number of chips. 
The first player who collects $n$ chips wins the game.  \\ \\
Let $D(n,k)$ be the number of ways for a player to have $n-1$ or $n-2$ chips on his $k^{th}$ move without ever collecting $n$ chips 
(so the game still goes on). 

\begin{lem}
The numbers $D(n,k)$ satisfy the following recurrence relation,
\[  D(n,k) = D(n-1,k+1)-D(n-3,k)  \] with
base cases $D(-1,k)=D(0,k) = 0$ and 
\[D(1,3m)=\dfrac{\binom{3m}{m}}{2m+1}, \quad
D(1,3m+1)=\dfrac{\binom{3m+1}{m+1}}{2m+1}
, \quad D(1,3m+2)=0.\]
\end{lem}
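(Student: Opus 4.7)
The plan is to establish the recurrence by a one-step conditioning, dispatch the trivial boundary values, and then prove the explicit formulas for $D(1,k)$ via the cycle lemma.

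First I would condition on the player's first move. If that move is $+2$, the remaining $k-1$ moves form a path which, after shifting the origin down by $2$, is counted by $D(n-2,k-1)$; if it is $-1$, the shift up by $1$ gives $D(n+1,k-1)$. This yields the primary recurrence
\[
D(n,k)=D(n-2,k-1)+D(n+1,k-1),\qquad n\geq 1,\ k\geq 1.
\]
Replacing $(n,k)$ by $(n-1,k+1)$ in this identity and rearranging gives the stated recurrence $D(n,k)=D(n-1,k+1)-D(n-3,k)$. The boundary values $D(-1,k)=D(0,k)=0$ are forced by the definition: when $n\leq 0$, the starting position $0$ already satisfies $0\geq n$, so the condition ``never collects $n$ chips'' fails before any move is made and no path is counted.

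The substantive step is the closed form for $D(1,k)$, which counts length-$k$ sequences of $+2$'s and $-1$'s whose partial sums stay $\leq 0$ and whose final value is $0$ or $-1$. If such a sequence uses $a$ copies of $+2$ and $b$ copies of $-1$, then $a+b=k$ and $2a-b\in\{0,-1\}$, so $3a\in\{k,k-1\}$; this has no integer solution when $k\equiv 2\pmod{3}$, forcing $D(1,3m+2)=0$. For the two surviving residues I would prepend one extra $-1$ at the front of the sequence: this converts the non-strict constraint ``partial sums $\leq 0$'' into the equivalent strict constraint ``partial sums $\leq -1$'' for the augmented sequence, and automatically forces the augmented sequence to begin with that $-1$. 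Negating every entry, the augmented sequence has each entry $\leq 1$ and total sum $+1$ (for $k=3m$) or $+2$ (for $k=3m+1$), with all partial sums required to be $\geq 1$. The Dvoretzky--Motzkin cycle lemma then asserts that exactly $1$ out of the $3m+1$, respectively $2$ out of the $3m+2$, cyclic rotations of each such ordering is valid, which produces
\[
D(1,3m)=\frac{1}{3m+1}\binom{3m+1}{m}=\frac{1}{2m+1}\binom{3m}{m},
\]
\[
D(1,3m+1)=\frac{2}{3m+2}\binom{3m+2}{m}=\frac{1}{2m+1}\binom{3m+1}{m+1}.
\]

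The main obstacle will be the bookkeeping in the cycle-lemma step, specifically matching the non-strict partial-sum condition of the original problem to the strict positivity required by the standard formulation of the cycle lemma. The prepended $-1$ is the key device that bridges the two; once it is in place the rest is routine algebraic simplification of the two binomial expressions.
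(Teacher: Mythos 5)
Your proposal is correct, and for the recurrence itself it coincides with the paper's argument: condition on the first move to get $D(n,k)=D(n-2,k-1)+D(n+1,k-1)$, then substitute $(n,k)\mapsto(n-1,k+1)$ and rearrange; your observation that $D(-1,k)=D(0,k)=0$ because the start position already meets a threshold $n\le 0$ is the right justification for the boundary values the paper merely asserts. Where you genuinely diverge is in the base cases $D(1,k)$: the paper identifies these counts as the $3$-Raney sequences (OEIS A001764 and A006013) and cites \emph{Concrete Mathematics} rather than proving the formulas, whereas you derive them from scratch. Your derivation is sound: the divisibility argument $3a\in\{k,k-1\}$ correctly kills the residue $k\equiv 2\pmod 3$; prepending a $-1$ converts ``partial sums $\le 0$'' into the strict condition ``partial sums $\le -1$'' and, after negation, puts you exactly in the setting of the Dvoretzky--Motzkin cycle lemma with entries $\le 1$ and total sum $s=1$ or $s=2$, giving $\frac{1}{3m+1}\binom{3m+1}{m}$ and $\frac{2}{3m+2}\binom{3m+2}{m}$, which simplify to the stated $\frac{1}{2m+1}\binom{3m}{m}$ and $\frac{1}{2m+1}\binom{3m+1}{m+1}$ (and check against $D(1,3)=1$, $D(1,4)=2$). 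The trade-off is the usual one: the paper's citation is shorter and places the numbers in a known family, while your cycle-lemma argument makes the lemma self-contained; just be sure, when writing it up, to use the version of the cycle lemma that counts cyclic shifts by starting index (so the $\frac{s}{N}\binom{N}{m}$ count is valid even for arrangements with cyclic symmetry).
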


\begin{proof}
The recurrence relation arises from whether the first move is $+2$ or $-1$,
\[D(n,k) = D(n-2,k-1)+ D(n+1,k-1).\]
After shifting indexes and rearranging terms, we get the desired recurrence relation.\\

\noindent For the base cases, the sequence of non-negative integers 
$D(1,3m)$ is a generalization of Catalan numbers. More precisely, 
the number $D(1,3m)$ is the number of ways
for the permutation of $2m$ copies of $-1$ and $m$ copies of 2
such that the partial sum is never more than 0.
It is the sequence A001764 in OEIS and was named
as {\it $3$-Raney sequences} in Section 7.5 in the book `Concrete Mathematics' \cite{GKP}.
Similarly, the number $D(1,3m+1)$ is the number of ways
for the permutation of $2m+1$ copies of $-1$ 
and $m$ copies of 2 such that the partial 
sum is never more than 0. It is the sequence A006013 in OEIS.
\end{proof}

\noindent We have the following relation \[r(n,k)= \dfrac{D(n,k-1)}{2^k}\]because the player has $n-1$ or $n-2$ chips on his $(k-1)^{th}$ move, and 
on his next move he must collect $2$ chips to win the game. Also, 
it is intuitively clear that the number $q(n,k)$ in this case is less than or 
equal to $q(n,k)$ for the case $\{a,b\}=\{-1,1\}$ for any fixed $n$ and $k$. 
Therefore, $\displaystyle \lim_{k \to \infty}q(n,k) =0$  for each $n \geq 0$. 
For each $n\geq 1$, we evaluate 
$\displaystyle \sum_{k=1}^\infty r^2(n,k)$ 
and apply Theorem \ref{theorem1} to find the winning probability of the second player. \\

\noindent However, this sum is not Gosper-summable
(no {\it nice} closed-form formula for the partial sum). 
And these sums do not seem to converge to any famous constant
neither. We list their numerical values below.

 \begin{center}
    \label{tab:table1}
    \begin{tabular}{c|c|c} 
      $n$ & $\displaystyle \sum_{k=1}^\infty r^2(n,k)$ & $ p_n$ \\
      \hline
      1 & 0.3221721826105... &     0.33891390869471156...\\
      2 & 0.2886887304423... &     0.35565563477884626...\\
      3 & 0.1547549217692... &     0.42262253911538507...\\
      4 & 0.1241072133089... &     0.43794639334553199...\\
      5 & 0.0941564190484... &     0.45292179047578731...\\
      10 & 0.047917368748... &   0.47604131562562199...\\
      20 & 0.028469734522... &   0.48576513273891113...\\
      100 & 0.010952807500... &  0.49452359624969611...
    \end{tabular}
  \end{center}

\section{A remark on Theorem 1} \label{section4}
We would like to find an analog of Theorem \ref{theorem1} when
players have the same set of moves (add $a$ or $b$ chips),
but now the first player wins if he collects $n_1$ chips first 
and the second player wins if he collects $n_2$ chips first. \\ \\ 
For a fixed set of moves $\{a, b\}$, let 
$p_{n_1,n_2}$ be the probability that the second player 
collects $n_2$ chips before the first player collects $n_1$ chips. Then
\begin{equation} \label{new}
  p_{n_1,n_2} = \sum_{k=1}^{\infty} q(n_1,k)\cdot r(n_2,k). 
\end{equation}
Similarly, the probability that the first player collects
$n_1$ chips before the second player collects $n_2$ chips is 
\[  \sum_{k=1}^{\infty} q(n_2,k-1)\cdot r(n_1,k). \]

\noindent Assuming that the game will not go on indefinitely 
(the probability that either one of the players wins the game is 1), i.e., $a+b \geq 0$, we have 
\[  \sum_{k=1}^{\infty} q(n_2,k-1)\cdot r(n_1,k)
+ \sum_{k=1}^{\infty} q(n_1,k)\cdot r(n_2,k) = 1.\]

\noindent By Claim 1, we have the generalized version of Theorem \ref{theorem1} as follows:

\begin{prop} \label{proposition7}
For a fixed set of moves \{$a$,$b$\}, 
let  $p_{n_1,n_2}$ be the probability that the second player collects $n_2$ chips before the first player collects $n_1$ chips.
If the probability that either one of the player wins the game is 1, then
\[  p_{n_1,n_2} +p_{n_2,n_1} + \sum_{k=1}^{\infty} r(n_1,k)\cdot r(n_2,k) = 1.  \]
\end{prop}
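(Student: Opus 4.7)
The plan is to start from the identity displayed in the excerpt just before the proposition, which expresses the fact that (under the terminating-game assumption) one of the two players wins with probability $1$:
\[ \sum_{k=1}^{\infty} q(n_2,k-1)\cdot r(n_1,k)+ \sum_{k=1}^{\infty} q(n_1,k)\cdot r(n_2,k) = 1. \]
The second sum is exactly $p_{n_1,n_2}$ by definition (\ref{new}), so the equation reads
\[ \sum_{k=1}^{\infty} q(n_2,k-1)\cdot r(n_1,k) + p_{n_1,n_2} = 1. \]

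Next I would use Claim 1, which gives $r(n_2,k) = q(n_2,k-1) - q(n_2,k)$, i.e.\ $q(n_2,k-1) = q(n_2,k) + r(n_2,k)$. Substituting this into the first sum and splitting term-by-term yields
\[ \sum_{k=1}^{\infty} q(n_2,k)\cdot r(n_1,k) + \sum_{k=1}^{\infty} r(n_2,k)\cdot r(n_1,k) + p_{n_1,n_2} = 1. \]
The first of the three remaining terms is exactly $p_{n_2,n_1}$, by applying definition (\ref{new}) with the roles of $n_1$ and $n_2$ interchanged. Rearranging gives the claimed identity.

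The only subtlety is justifying that the term-by-term splitting is legitimate. Since the random variable $r(n,k)$ is non-negative and $q(n,k) \le 1$, every series involved has non-negative summands bounded above by $r(n_1,k)$ or $r(n_2,k)$, whose sums are finite (indeed at most $1$) under the hypothesis $\lim_{k\to\infty} q(n,k)=0$; so absolute convergence and the rearrangement pose no problem. This is really the only place where the terminating-game assumption is used, and it is the step I would double-check most carefully, but no serious obstacle arises.
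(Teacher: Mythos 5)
Your argument is correct and is essentially the paper's own: the authors likewise start from the displayed identity that one of the players wins with probability $1$, identify the second sum as $p_{n_1,n_2}$, and apply Claim 1 to rewrite $q(n_2,k-1)$ as $q(n_2,k)+r(n_2,k)$, which splits the first sum into $p_{n_2,n_1}$ plus $\sum_{k\ge 1} r(n_1,k)\,r(n_2,k)$. Your added remark on the legitimacy of the term-by-term splitting (all summands non-negative) is a welcome precision the paper leaves implicit.
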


\subsection{ $p_{n_1,n_2}$ for the case $ \{a,b\} = \{-1, 1\}$} \label{section4.1}
We list some values of $p_{n_1,n_2}$ obtained by the equation \eqref{new}.
Note again that, for any fixed $n$, the probability $q(n,k)$ has a nice closed form in $k$.
For example, previously, in Section 2.3, we obtained
\[ q(1,2m)=q(1,2m-1)= \dfrac{\binom{2m}{m}}{4^m}. \]
All of the infinite sums for different values of $n_1$ and $n_2$ in the equation \eqref{new} also simplify nicely.
For more values of $p_{n_1, n_2}$, one can use the function \texttt{Win2($n_1$, $n_2$)} in the 
accompanied program.

\newpage
\begin{table}[ht]

\centering 
\begin{tabular}{c | c c c c c} 
 \hline 
 $n_1$ $\backslash$ $n_2$ & 1 & 2 & 3 & 4 & 5\\ [0.5ex] \hline \\
1 & $\dfrac{\pi-2}{\pi}$ &  $\dfrac{4-\pi}{\pi}$ & $\dfrac{10-3\pi}{\pi}$ &
$\dfrac{3\pi-8}{3\pi}$ & $\dfrac{15\pi-46}{3\pi}$  \\ [3ex]
2 & $\dfrac{2(\pi-2)}{\pi}$ &  $\dfrac{3\pi-8}{\pi}$ & $\dfrac{2(10-3\pi)}{\pi}$ &
$\dfrac{3(16-5\pi)}{\pi}$ & $\dfrac{2(15\pi-46)}{3\pi}$  \\ [3ex]
3 & $\dfrac{3\pi-2}{3\pi}$ &  $\dfrac{7\pi-20}{\pi}$ & $\dfrac{39\pi-118}{3\pi}$ &
$\dfrac{296-93\pi}{3\pi}$ & $\dfrac{5(142-45\pi)}{3\pi}$  \\ [3ex]
4 & $\dfrac{8}{3\pi}$ &  $\dfrac{16-3\pi}{3\pi}$ & $\dfrac{8(12\pi-37)}{3\pi}$ &
$\dfrac{195\pi-608}{3\pi}$ & $\dfrac{8(63-20\pi)}{\pi}$  \\ [3ex]
5 & $\dfrac{5\pi-2}{5\pi}$ &  $\dfrac{92-27\pi}{3\pi}$ & $\dfrac{926-285\pi}{15\pi}$ &
$\dfrac{7(23\pi-72)}{\pi}$ & $\dfrac{5115\pi-16046}{15\pi}$  \\ [1ex]
\end{tabular}
\caption{The winning probability of the 
second player, $p_{n_1,n_2}$} 
\end{table}


\end{document}